\newcommand{\executeiffilenewer}[3]{%
  \ifnum\pdfstrcmp{\pdffilemoddate{#1}}%
   {\pdffilemoddate{#2}} > 0 {\immediate\write18{#3}}\fi}
 \newcommand{%
 \executeiffilenewer{.svg}{.pdf}%
  {inkscape -z -D --file=.svg %
 --export-pdf=.pdf --export-latex}%
  \input{.pdf_tex}%
}[1]{%
 \executeiffilenewer{#1.svg}{#1.pdf}%
  {inkscape -z -D --file=#1.svg %
 --export-pdf=#1.pdf --export-latex}%
  \input{#1.pdf_tex}%
}
\theoremstyle{plain}
\newtheorem{theorem}{Theorem}[]
\newtheorem{oldtheorem}{Theorem}
\newtheorem{lemma}[]{Lemma}
\theoremstyle{definition}
\newtheorem{remark}[]{Remark}
\newtheorem*{ackn}{Acknowledgement}
\newcommand{\blalpha}{\boldsymbol{\alpha}}
\newcommand{\blsigma}{\boldsymbol{\sigma}}
\newcommand{\knm}{\mathfrak{k}}
\newcommand{\T}{\mathfrak{T}}
\newcommand{\func}[1]{\psi_{\alpha_{#1}}}
\newcommand{\Z}{\mathbb{Z}}
\newcommand{\contfrac}[1]{[#1_0;#1_1, #1_2, \dots, #1_\nu, \dots]}
\def\R{{\mathbb R}} \def\Z{{\mathbb Z}}
\begin{document}
\title{On irrationality measure functions for several real numbers}
\author{Viktoria Rudykh}
\date{}

\begin{abstract}
	For $n$-tuple $\blalpha = (\alpha_1, \dots, \alpha_n)$ of pairwise independent numbers  we consider permutations  $$\blsigma(t): \{1, 2, 3, \dots, n\} \rightarrow \{ \sigma_1, \sigma_2, \sigma_3, \dots, \sigma_n \},$$  $$\func{\sigma_1}(t) > \func{\sigma_2}(t) > \func{\sigma_3}(t) > \dots > \func{\sigma_n}(t)$$ of irrationality measure functions $\func{}(t) = \min\limits_{1 \leq q \leq t} || q\alpha||$. Let $\knm(\blalpha)$ be the number of infinitely occurring different permutations $\{\blsigma_1, \dots, \blsigma_{\knm(\blalpha)}\}$.
We prove that the length of $n$-tuple $\blalpha$ with $\knm(\blalpha) = k$ is
$$ n \leq \frac{k(k+1)}{2}.$$
This result is optimal.

	
\end{abstract}
\maketitle

\section{Introduction}
For an irrational number $\xi \in \R$ we consider the irrationality measure function
\begin{equation*}
	\psi_\xi(t) = \min\limits_{1 \leq q \leq t, \ q \in \Z} || q\xi||,
\end{equation*}
where $|| . ||$ denotes the distance to the nearest integer.

Let 
\begin{equation*}
	q_0 \leq q_1 < q_2 < \dots < q_n < q_{n+1} < \dots
\end{equation*}
be the sequence of denominators of convergents to $\xi$. It is a well known fact (see \cite{Schmidt}) that
\begin{equation*}
	\psi_\xi(t) = || q_n\xi|| \text{ for } q_n \leq t < q_{n+1}.
\end{equation*}
In 2010 Kan and Moshchevitin \cite{KM} proved
\begin{oldtheorem}
	\label{th:KM}
	For any two  different irrational numbers $\alpha, \beta$ such that $\alpha \pm \beta \not\in \Z$ the difference function
	\begin{equation*}
		\psi_\alpha(t)  - \psi_\beta(t)
	\end{equation*}
	changes its sign infinitely many times as $t \rightarrow +\infty$.
\end{oldtheorem}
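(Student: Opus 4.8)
The plan is to argue by contradiction, using that the hypothesis $\alpha\pm\beta\notin\Z$ is exactly the condition under which $\psi_\alpha$ and $\psi_\beta$ are not the same function: since $\|q\xi\|$ depends only on $\xi\bmod 1$ and $\|{-q\xi}\|=\|q\xi\|$, one has $\psi_\alpha\equiv\psi_\beta$ if and only if $\alpha\equiv\pm\beta\pmod 1$. If the difference $\psi_\alpha(t)-\psi_\beta(t)$ failed to change sign infinitely often, then after interchanging $\alpha$ and $\beta$ we could assume there is $t_0$ with $\psi_\alpha(t)\ge\psi_\beta(t)$ for all $t\ge t_0$. The theorem then reduces to showing that such eventual domination forces $\alpha\equiv\pm\beta\pmod 1$, contradicting the hypothesis.

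First I would record the local structure of the two functions. Writing $Q_0<Q_1<\cdots$ for the denominators of convergents to $\beta$, we have $\psi_\beta(t)=\|Q_n\beta\|$ on $[Q_n,Q_{n+1})$, with $\psi_\beta$ decreasing precisely at each $Q_n$; the convergent denominators $q_m$ of $\alpha$ play the same role for $\psi_\alpha$. Since $\psi_\alpha$ is non-increasing and $\psi_\beta$ is constant equal to $\|Q_n\beta\|$ on the whole flat stretch $[Q_n,Q_{n+1})$, the domination $\psi_\alpha\ge\psi_\beta$ forces
\[
\min_{q<Q_{n+1}}\|q\alpha\|\ \ge\ \|Q_n\beta\|\qquad\text{for all large }n.
\]
Taking $q=Q_n$ already gives $\|Q_n\alpha\|\ge\|Q_n\beta\|$, while the displayed bound says more: $\alpha$ has no approximation of denominator below $Q_{n+1}$ beating $\|Q_n\beta\|\asymp 1/Q_{n+1}$. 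Performing the same comparison at the convergent denominators $q_m$ of $\alpha$ yields a matching constraint controlling $\beta$ by $\alpha$. Interleaving the two sequences of denominators, these inequalities say that the convergents of $\alpha$ and of $\beta$ must interlace with comparable approximation quality at every scale.

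The heart of the argument, and the step I expect to be the main obstacle, is to upgrade this ``comparable quality at every scale'' into an exact rigidity. Here the signed errors must enter: writing $Q_n\beta=p+(-1)^n\|Q_n\beta\|$ and $Q_n\alpha=a+\varepsilon_n$, I would track how $\varepsilon_n$ is pinned down by the two-sided inequalities above together with the recurrence $Q_{n+1}=a_{n+1}Q_n+Q_{n-1}$. The goal is to show that sustained domination cannot merely make the qualities comparable but must eventually force the two continued-fraction algorithms into lockstep, so tightly that the signed errors align and $Q_n(\alpha-\beta)\in\Z$ (or $Q_n(\alpha+\beta)\in\Z$, according to a sign that is constant in $n$) for all large $n$. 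This is the delicate bookkeeping: the race between the stepwise decreases of $\psi_\alpha$ and $\psi_\beta$ has to be controlled quantitatively so that neither function can remain above the other across an entire flat stretch of the slower one.

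Once such an integer relation along the convergents is established, the conclusion is immediate. If $Q_n(\alpha-\beta)\in\Z$ for all large $n$, then $\alpha-\beta=Q_n(\alpha-\beta)/Q_n$ is rational with denominator dividing both $Q_n$ and $Q_{n+1}$; since $\gcd(Q_n,Q_{n+1})=1$, this forces $\alpha-\beta\in\Z$, and the sign ``$+$'' gives $\alpha+\beta\in\Z$ in the same way. Either outcome contradicts the standing hypothesis $\alpha\pm\beta\notin\Z$, so eventual domination is impossible and $\psi_\alpha(t)-\psi_\beta(t)$ must change sign infinitely many times.
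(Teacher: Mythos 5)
You should first note that the paper contains no proof of this statement at all: it is quoted as Theorem~A from Kan and Moshchevitin \cite{KM}. So the comparison can only be with the known proof strategy, whose key ingredients the paper does reproduce as tools (Lemma~\ref{lm:pairs}, Lemma~\ref{lm:MM}, Remark~\ref{rm:main}). Your reduction is sound as far as it goes: assuming eventual domination $\psi_\alpha(t)\ge\psi_\beta(t)$, extracting $\min_{q<Q_{n+1}}\|q\alpha\|\ge\|Q_n\beta\|$ from the flat stretches, and the endgame via $\gcd(Q_n,Q_{n+1})=1$ (which is exactly the mechanism behind Lemma~\ref{lm:pairs}) are all correct.

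The genuine gap is that the step you yourself label ``the heart of the argument'' is never carried out: you state as a \emph{goal} that comparable approximation quality at every scale forces the two continued-fraction algorithms into lockstep, but you give no mechanism for it. That step is not delicate bookkeeping around a proof — it \emph{is} the theorem. Concretely, domination only yields one-sided inequalities, and you must explain why they cannot all hold with strict inequality for all large $n$, i.e.\ why $\alpha$ cannot simply remain uniformly worse approximable across every flat stretch of $\psi_\beta$ without any integer relation arising. The known resolution has the shape of the paper's Lemma~\ref{lm:MM}: one isolates a specific finite system of inequalities among $\xi_\nu,\eta_\mu$ and the denominators that eventual domination forces to occur, and proves a rigidity statement — the system can only be satisfied with equalities throughout, forcing coincidences of consecutive denominators $(q_\nu,q_{\nu+1})=(r_\mu,r_{\mu+1})$ infinitely often, after which Lemma~\ref{lm:pairs} gives $\alpha\pm\beta\in\Z$. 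Nothing in your plan identifies such a system or proves any rigidity. A secondary inaccuracy: your claimed conclusion ($Q_n(\alpha\mp\beta)\in\Z$ for \emph{all} large $n$, with a sign constant in $n$) is stronger than what the argument produces or needs; infinitely many coincidences of consecutive denominator pairs already suffice. As written, the proposal is a correct reduction plus an accurate description of the obstacle, but the obstacle itself is left unresolved, so it does not constitute a proof.
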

This result was generalised by several authors (see \cite{Dubickas}, \cite{M}, \cite{Shatskov}, \cite{Shulga}).

We call two irrational numbers $\alpha, \beta \in \R$ \textit{independent} if
\begin{equation}
	\label{eq:independent}
	\psi_\alpha(t) \neq \psi_\beta(t)
\end{equation}
for all $t$ large enough.

Let $\blalpha = (\alpha_1, \dots, \alpha_n) $ be an $n$-tuple of pairwise independent numbers. By definition (\ref{eq:independent}) we have $$\func{i}(t) \neq \func{j}(t)$$ for all $t$ large enough and for all $i \neq j$. So for large enough $t$ we have well defined permutation
\begin{equation*}
	\blsigma(t): \{1, 2, 3, \dots, n\} \rightarrow \{ \sigma_1, \sigma_2, \sigma_3, \dots, \sigma_n \},
\end{equation*}
\begin{equation*}
	\func{\sigma_1}(t) > \func{\sigma_2}(t) > \func{\sigma_3}(t) > \ldots > \func{\sigma_n}(t).
\end{equation*}

We define $\knm$-index by  
\begin{multline}
	\label{eq:k defin}
	\knm(\blalpha) = \max\{k : \exists \text{ different permutations } \blsigma_1, \dots, \blsigma_k: \\ \forall j \in \{1, \dots, k\}\ \forall t_0 > 0 \ \exists t > t_0 \ \blsigma(t)=\blsigma_j\}.
\end{multline}
So $\knm(\blalpha)$ is the number of permutations, which occur infinitely many times as $t \rightarrow \infty$.

In 2021 Manturov and Moshchevitin \cite{MM} proved the following statements. 
\begin{oldtheorem}
	\label{th:old k}
	For $n$-tuple $\blalpha = (\alpha_1, \dots, \alpha_n)$ of pairwise independent numbers one has
	\begin{equation*}
		\knm(\blalpha) \geq \sqrt{\frac{n}{2}}.
	\end{equation*}
\end{oldtheorem}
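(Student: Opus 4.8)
The plan is to convert the statement into a finite combinatorial problem about the set of limit permutations and then bound its size by induction on $k$.

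\smallskip
\noindent\textbf{Reduction.} Let $S=\{\blsigma_1,\dots,\blsigma_k\}$ be the permutations that occur infinitely often, so $|S|=\knm(\blalpha)=k$. Since there are only finitely many permutations of $\{1,\dots,n\}$ and each permutation \emph{not} in $S$ occurs only for bounded $t$, there is $T$ with $\blsigma(t)\in S$ for all $t>T$. Pairwise independence gives $\alpha_i\pm\alpha_j\notin\Z$ (otherwise $\func{i}\equiv\func{j}$, contradicting \eqref{eq:independent}), so Theorem~\ref{th:KM} applies to every pair and $\func{i}(t)-\func{j}(t)$ changes sign infinitely often; hence within $S$ every pair of indices occurs in \emph{both} relative orders. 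Finally $\blsigma(t)$ changes only at the denominators of the $\alpha_i$, and at such a time exactly the indices whose $\psi$ jumps have their value strictly decrease while the others are unchanged. Thus every transition is \emph{monotone}: the new permutation is obtained from the old one by sending some set of indices to strictly smaller values, preserving the order of all remaining indices. As each element of $S$ recurs, $S$ is strongly connected under these monotone transitions.

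\smallskip
\noindent\textbf{Combinatorial core and induction.} I would isolate the content as: \emph{a family of $k$ permutations, strongly connected under monotone transitions and in which every pair switches, has $n\le k(k+1)/2$.} The value $k(k+1)/2=k+(k-1)+\dots+1$ dictates an induction on $k$. Remove the set $L$ of indices occupying the top position in at least one $\blsigma\in S$; since $k$ permutations have at most $k$ distinct top entries, $|L|\le k$. Restricting $S$ to $\{1,\dots,n\}\setminus L$ preserves both the monotone-transition and the all-pairs-switch properties, and I claim it strictly lowers the number of recurring permutations, to at most $k-1$. Granting this, $n\le |L|+\tfrac{(k-1)k}{2}\le \tfrac{k(k+1)}{2}$, with base case $k=1$ (a single fixed order, so $n=1$). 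Equivalently this assigns to each index a pair $(\ell,j)$ with $1\le\ell\le j\le k$, namely the layer $\ell$ at which it is peeled together with its rank $j$ inside that layer, giving the injective reformulation $n\le\binom{k+1}{2}$.

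\smallskip
\noindent\textbf{Main obstacle.} The crux, and the step I expect to be hardest, is showing that peeling the top layer $L$ strictly decreases the count of recurring permutations. The all-pairs-switch hypothesis is indispensable here: without it, two independent parallel cycles on disjoint blocks keep all reduced permutations distinct, so the drop must be extracted precisely from the fact that every \emph{cross} pair switches, combined with the monotone shape of the transitions. A further subtlety is that several $\func{i}$ may jump at once, so a transition is neither an adjacent swap nor a single slide; the monotone framework must be used in full generality, and one must check that simultaneous jumps cannot be used to beat the bound. This last point is delicate, since with \emph{unconstrained} monotone moves alone the inequality would already fail — the reversed pair of orders on four indices realizes $k=2$, $n=4$ — so the argument must retain enough of the $\psi$-function structure to exclude such configurations while admitting the extremal ones.

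\smallskip
\noindent\textbf{Optimality.} For optimality I would give a direct construction: for each $k$ choose $\alpha_1,\dots,\alpha_{k(k+1)/2}$ through prescribed continued fractions so that their irrationality-measure functions realize an extremal monotone-strongly-connected family built from layers of sizes $k,k-1,\dots,1$, and verify via the same recursion that exactly $k$ permutations recur.
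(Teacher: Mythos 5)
Your reduction and the final counting target are reasonable, but the proposal has a genuine gap at exactly the point you flag as the ``main obstacle,'' and your own observation shows this gap cannot be closed inside the framework you set up. You reduce to the claim that a family of $k$ permutations, strongly connected under monotone transitions and with every pair switching, forces $n\le k(k+1)/2$; you then correctly note that this claim is \emph{false} (the two reversed orders on four indices give $k=2$, $n=4$). So the inductive peeling step --- that removing the top layer strictly decreases the number of recurring permutations --- is not merely the hardest step: it is unprovable from the hypotheses you have retained, because all arithmetic content of the problem was discarded in the reduction and no mechanism is proposed for reintroducing it. An argument whose key lemma is announced but whose ambient setting provably cannot support it is not a proof.

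The missing ingredient is precisely the paper's Lemma \ref{lm:main}: for all $t$ large enough, the number $\tau(t)$ of functions $\func{i}$ that jump simultaneously at $t$ satisfies $\tau(t)\le\knm(\blalpha)$. This is the one place where continued fractions enter (via Lemma \ref{lm:MM} and Remark \ref{rm:main}: two functions jumping at a common denominator must have crossed in a controlled way at an earlier denominator), and it is exactly what excludes your counterexample --- there the transition between the two reversed orders needs three simultaneous jumps while only two permutations recur. With this lemma the paper does not peel top layers at all: it lists the distinct recurring permutations at times $T_1<\dots<T_k$, assigns each index to the set $I_j$ of the \emph{first} such time at which its function jumps, shows that the restricted permutations satisfy $\widetilde{\blsigma}_1=\dots=\widetilde{\blsigma}_{j-1}$ on $I_j$ so that $\knm(\{\alpha_i\}_{i\in I_j})\le k-j+2$, and then applies Lemma \ref{lm:main} to get $|I_j|\le k-j+2$; since Theorem \ref{th:KM} forces every index but one to lie in some $I_j$, summing gives $n\le 1+\sum_{j=2}^{k}(k-j+2)=k(k+1)/2$, which implies the stated bound $\knm(\blalpha)\ge\sqrt{n/2}$ with room to spare. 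If you wish to salvage the layer-peeling scheme, you would still need a quantitative control like Lemma \ref{lm:main} on how many indices can move in a single transition; once that is available, the first-jump decomposition is the more direct way to finish.
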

\begin{oldtheorem}
	\label{th:MM}
	For $k \geq 3$ and $n = \frac{k(k+1)}{2}$ there exists an $n$-tuple $\blalpha$ of pairwise independent numbers with
	\begin{equation*}
		\knm(\alpha) = k.
	\end{equation*}
\end{oldtheorem}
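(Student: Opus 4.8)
The plan is to realize the extremal example geometrically, by first designing the graphs of the functions $\func{}$ and only afterwards producing genuine numbers $\alpha_i$ with the prescribed behaviour. Pass to logarithmic coordinates $\tau=\log t$ and $y=-\log\psi$; there each graph $y_i(\tau)=-\log\func{i}(e^\tau)$ becomes an increasing piecewise-linear staircase, and the permutation $\blsigma(t)$ is exactly the top-to-bottom ordering of the heights $y_1(\tau),\dots,y_n(\tau)$. Recall from the introduction that each $\func{i}$ is a decreasing step function, constant on $[q_m,q_{m+1})$ and dropping at the convergent denominators; hence at a single crossing instant exactly one height $y_i$ overtakes a contiguous block of the others, so the ordering can only change by the elementary move ``one element passes the block immediately above it''. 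Since there are finitely many orderings in total, for large $t$ the permutation $\blsigma(t)$ takes only values in its recurrent set, and consecutive recurrent orderings must differ by a single elementary move: any intermediate ordering produced inside a transition would itself be visited infinitely often, hence be recurrent and illegitimately inflate $\knm(\blalpha)$. The task is therefore to design a trajectory through orderings, built from these elementary moves, whose recurrent set has exactly $k$ elements while $n=\tfrac{k(k+1)}{2}$.

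I would make the whole configuration self-similar: arrange the staircases to be invariant, up to a fixed translation, under $\tau\mapsto\tau+T$ for some period $T$. On the arithmetic side this corresponds to choosing the $\alpha_i$ with eventually periodic continued fraction expansions, so that each $\func{i}$ is eventually self-similar and the ordering becomes periodic in $\tau$; its recurrent set is then precisely the finite list of orderings attained in one period, and it suffices to make this list have length $k$. Concretely I would fix a single zigzag profile $f(\tau)$ (the common self-similar shape of $-\log\psi$) and place $n$ horizontally and vertically shifted copies $f(\tau-a_i)+b_i$, one per number. By Theorem~\ref{th:KM}, which applies since pairwise independence forces $\alpha_i\pm\alpha_j\notin\Z$, every pair of heights must overtake one another infinitely often, so within one period each pair $\{i,j\}$ is ordered in both ways; the shifts $(a_i,b_i)$ must be chosen so that, despite all these crossings, only $k$ distinct orderings ever occur and each transition is a single block move.

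To hit the exact value $n=\tfrac{k(k+1)}{2}$ I would build the family by induction on $k$, adjoining one new recurrent permutation at a time. Since $\binom{k+1}{2}-\binom{k}{2}=k$, passing from $k-1$ to $k$ recurrent permutations should cost exactly $k$ new numbers. Thus I start from a configuration of $\binom{k}{2}$ staircases with $k-1$ recurrent orderings and add a fresh layer of $k$ staircases whose shifts are tuned so that, as they sweep through the existing ones inside each period, they create exactly one additional ordering and reorganise the elementary moves into a closed walk visiting precisely $k$ orderings. The bookkeeping that the $k$ new heights both overtake and are overtaken by all the old ones, so that Theorem~\ref{th:KM} is respected for every new pair, while no unplanned ordering appears, is the combinatorial heart of the construction, and it is what one expects to force the layer sizes $1,2,\dots,k$ and hence the triangular total.

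Finally I would convert the geometric design into actual numbers. Given the target heights and crossing times, I would prescribe for each $i$ a lacunary sequence of denominators $q^{(i)}_m$ together with approximation values $\norm{q^{(i)}_m\alpha_i}$ of the designed sizes, and read off $\alpha_i=\contfrac{a^{(i)}}$ from the corresponding partial quotients; rapid growth of the $q^{(i)}_m$ gives complete freedom to place each staircase where the design demands. A small generic perturbation then guarantees $\alpha_i\pm\alpha_j\notin\Z$ and that no two step functions are ever exactly equal, i.e.\ pairwise independence. I expect the main obstacle to be exactly this realization step together with the verification that the recurrent set is the intended one: one must show that every elementary move of the periodic trajectory is genuinely produced by a single denominator jump, that no extra intermediate ordering sneaks in at any transition, and that all the prescribed inequalities among the values $\norm{q^{(i)}_m\alpha_i}$ are mutually consistent across a full period. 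Establishing that these constraints are simultaneously satisfiable for every $k\ge 3$ is the crux; once it is done, the resulting tuple has $\knm(\blalpha)=k$ and length $n=\tfrac{k(k+1)}{2}$.
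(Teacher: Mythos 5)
This statement is Theorem~\ref{th:MM}, which the paper itself does not prove: it is quoted from Manturov--Moshchevitin \cite{MM}, so your proposal has to stand on its own as a construction. It does not. What you have written is a design brief rather than a proof, and you concede this yourself (``Establishing that these constraints are simultaneously satisfiable for every $k\ge 3$ is the crux''). The entire content of the theorem is the exhibition of a concrete $n$-tuple and the verification that exactly $k$ orderings recur; in your text no configuration of shifts is ever produced, and the inductive scheme with layer sizes $1,2,\dots,k$ is supported only by the numerology $\binom{k+1}{2}-\binom{k}{2}=k$, not by any mechanism forcing a new layer of $k$ numbers to create exactly one new recurrent ordering and nothing else.

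Beyond incompleteness, two structural claims in the sketch are wrong, and they sit exactly where the real difficulty lies. First, the ``complete freedom to place each staircase'' fails: since $\psi_\alpha(t)=\xi_\nu$ on $[q_\nu,q_{\nu+1})$ and $\tfrac{1}{q_\nu+q_{\nu+1}}<\xi_\nu<\tfrac{1}{q_{\nu+1}}$, in your logarithmic coordinates the height of every step is pinned, within the additive constant $\log 2$, to the abscissa of its right endpoint. Hence the horizontal and vertical shifts $(a_i,b_i)$ are not independent parameters --- a staircase is determined up to bounded error by its jump locations alone --- and any genuine construction must be carried out inside this rigid coupling. Second, it is not true that at each crossing ``exactly one height overtakes a contiguous block'': pairwise independence does not prevent distinct $\alpha_i$ from sharing a denominator of a convergent, so several $\func{i}$ can be discontinuous at the same point; this is precisely why the paper introduces $\tau(t)$ and needs Lemma~\ref{lm:main}, and Remark~\ref{rm:main} describes exactly such a simultaneous jump. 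Consequently consecutive recurrent orderings need not differ by a single elementary move, and your argument that intermediate orderings would ``inflate $\knm(\blalpha)$'' presumes the very structure the construction is supposed to arrange. To turn the proposal into a proof you would need to replace the geometric sketch by an explicit family of numbers (for instance with eventually periodic continued fractions), respecting the height--endpoint coupling above, and verify the list of recurrent permutations directly, which is what the construction in \cite{MM} actually accomplishes.
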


In this article we give an improvement of Theorem \ref{th:old k}. Now we formulate our main result. 
\begin{theorem}
	\label{th:main}
	The length of $n$-tuple $\blalpha = (\alpha_1, \dots, \alpha_n)$ of pairwise independent numbers with $\knm(\blalpha) = k$ is
	\begin{equation*}
	n \leq \frac{k(k+1)}{2}.
	\end{equation*}
\end{theorem}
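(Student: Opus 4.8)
The plan is to combine a stabilization step with the Kan--Moshchevitin sign-change theorem and then run an induction on $k$, deleting at each stage at most $k$ of the numbers so as to lower $\knm$ by one. First I would reduce to a finite combinatorial situation. There are only finitely many permutations of $\{1,\dots,n\}$, and by (\ref{eq:k defin}) only $k$ of them recur, so every non-recurrent permutation is taken by $\blsigma(t)$ on a bounded set of $t$; hence there is $T_0$ with $\blsigma(t)\in S=\{\blsigma_1,\dots,\blsigma_k\}$ for all $t>T_0$, each $\blsigma_s$ still being attained for arbitrarily large $t$. Next I would extract the consequence of Theorem \ref{th:KM}: independence forces $\alpha_i\pm\alpha_j\notin\Z$ for every pair (otherwise $\func{i}\equiv\func{j}$), so $\func{i}(t)-\func{j}(t)$ changes sign infinitely often. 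Since these sign changes continue past $T_0$ and every ordering occurring after $T_0$ is one of the $\blsigma_s$, it follows that both relative orders of $\func{i}$ and $\func{j}$ are realised inside $S$.

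The geometric input is the shape of a single transition. Each $\func{i}$ is a nonincreasing step function dropping exactly at the denominators of the convergents of $\alpha_i$. At an instant $t=q$ where a set $Y$ of functions drops, each $\func{i}$ with $i\in Y$ decreases and therefore slides downward past the contiguous block of functions whose current values lie strictly between its old and new value, while every function outside $Y$ keeps both its value and its order relative to the others across $t=q$. Consequently the two permutations $\blsigma,\blsigma'$ flanking this instant \emph{agree after the indices in $Y$ are deleted}: restricting to $\{1,\dots,n\}\setminus Y$ collapses $\blsigma$ and $\blsigma'$ to one ordering. This is the crucial local fact: deleting the numbers that jump at one transition strictly decreases the number of recurrent permutations, since two distinct members of $S$ are identified while no new ordering is created.

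With this in hand the induction is immediate in form. If I can always locate a transition whose jumping set $Y$ satisfies $|Y|\le k$, then deleting $Y$ leaves an $(n-|Y|)$-tuple of pairwise independent numbers with $\knm\le k-1$; the inductive hypothesis then gives $n-|Y|\le\frac{(k-1)k}{2}$, whence $n\le\frac{(k-1)k}{2}+k=\frac{k(k+1)}{2}$. The base case $k=1$ is the remark that a single recurrent permutation forbids every pair from interchanging, forcing $n\le 1$.

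The main obstacle is exactly the estimate $|Y|\le k$ for the cheapest transition. Deleting a single number cannot work in general: an easy one-element reduction would yield the far smaller bound $n\le\frac{(k-1)k}{2}+1$, which the extremal tuples of Theorem \ref{th:MM} violate, so those tuples must exploit several convergent denominators coinciding and thereby force many functions to jump at once. The heart of the argument is therefore to show that among the transitions occurring infinitely often there is always one moving at most $k$ functions simultaneously --- equivalently, that two of the $k$ recurrent permutations can be merged by deleting at most $k$ indices. This is the step that sharpens the estimate $n\le 2k^2$ behind Theorem \ref{th:old k} to the optimal triangular bound, and it is where one must carefully account for which contiguous blocks can be jumped and how the recurrence forces the upward displacements to balance the downward jumps over one period.
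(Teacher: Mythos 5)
Your scaffolding is sound, but the proof has a genuine gap exactly where you say "the heart of the argument" lies, and that gap is not a technicality --- it is the main content of the paper. Your reduction steps are all correct: after some $T_0$ every occurring permutation is recurrent; Theorem \ref{th:KM} forces every pair to swap infinitely often; at a transition where the jump set is $Y$, the two flanking permutations agree after deleting $Y$, so (since both flanking permutations are recurrent and distinct) the restriction map sends the $k$ recurrent permutations onto the recurrent permutations of the sub-tuple non-injectively, giving $\knm(\{\alpha_i\}_{i\notin Y}) \leq k-1$; and the induction $n \leq \frac{(k-1)k}{2} + |Y|$ then closes \emph{provided} some recurring transition has $|Y| \leq k$. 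But you never prove that bound, and it cannot be obtained by the bookkeeping you gesture at in your last sentence: the functions $\func{i}$ are monotone non-increasing step functions, so there are no "upward displacements" to balance against the downward jumps --- orderings change only because different functions drop at different times, and nothing in the combinatorics of step functions alone prevents all $n$ functions from sharing a jump point infinitely often.

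The missing claim is precisely the paper's Lemma \ref{lm:main}: for all large $t$, the number $\tau(t)$ of functions discontinuous at $t$ satisfies $\tau(t) \leq \knm(\blalpha)$ (in fact for \emph{every} large transition, not just a cheapest one). Its proof is genuinely number-theoretic, not combinatorial: it uses Lemma \ref{lm:MM} (a continued-fraction statement from \cite{MM}) through Remark \ref{rm:main}, which says that if two independent numbers share a convergent denominator $q_\nu = r_\mu$ and $\psi_\alpha < \psi_\beta$ just before it, then at the preceding denominator $q_{\nu-1}$ the order was reversed. From this, when $k'$ functions jump simultaneously at $T$, one extracts $k'$ pairwise distinct recurrent permutations (one just before $T$ and one just before each of the $k'-1$ distinct preceding denominators, which Lemma \ref{lm:pairs} forces to be distinct), whence $k' \leq \knm(\blalpha)$. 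Without this input your induction cannot start. For what it is worth, your overall organization is a legitimate alternative to the paper's: granted Lemma \ref{lm:main}, your delete-and-induct argument replaces the paper's partition of indices into the sets $I_j$ with $|I_j| \leq k-j+2$ and the summation $n \leq 1 + \sum_{j=2}^{k}(k-j+2) = \frac{k(k+1)}{2}$, and both yield the same bound; but as written your proposal assumes the one statement that actually requires the paper's machinery.
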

The bound from Theorem \ref{th:main} is optimal because of Theorem \ref{th:MM}.
\section{Auxiliary lemmas}
We represent irrational numbers $\alpha$ and $\beta$ as continued fractions
\begin{gather*}
	\alpha = \contfrac{a},\\
	 \beta = \contfrac{b},
\end{gather*}
where $a_0, b_0 \in \Z; \ a_\nu, b_\nu \in \Z_+, \ \nu = 1, 2, 3, \dots.$ \\
We define
\begin{align*}
	\alpha_\nu &= [a_\nu;a_{\nu+1}, a_{\nu+2}, \dots],  &\beta_\mu &= [b_\mu;b_{\mu+1}, b_{\mu+2}, \dots],\\
	\alpha_\nu^* &= [0; a_\nu, a_{\nu-1}, a_{\nu-2}, \dots, a_1], &\beta_\mu^* &= [0; b_\mu, b_{\mu-1}, b_{\mu-2}, \dots, b_1],\\
	\frac{p_\nu}{q_\nu} &= [a_0; a_1, a_2, \dots, a_\nu], &\frac{s_\mu}{r_\mu} &= [b_0; b_1, b_2, \dots, b_\mu],\\
	\xi_\nu &= |q_\nu \alpha - p_\nu|, &\eta_\mu &= |r_\mu \beta - s_\mu|.
\end{align*}
So,
\begin{gather*}
	\psi_\alpha(t) = \xi_\nu, \ q_\nu \leq t < q_{\nu+1}, \\
	\psi_\beta(t) = \eta_\mu, \ r_\mu \leq t < r_{\mu+1},
\end{gather*}
and
\begin{equation*}
	\alpha_\nu^* = \frac{q_{\nu-1}}{q_\nu}, \ \beta_\mu^* = \frac{r_{\mu-1}}{r_\mu}.
\end{equation*}
The next two simple lemmas describe obvious property of continued fractions (see \cite{M}, \cite{MM}).
\begin{lemma}
	\label{lm:pairs}
	If there are infinitely many pairs $(q_\nu, q_{\nu+1}) = (r_\mu, r_{\mu+1})$, then $\alpha \pm \beta \in \Z$.
\end{lemma}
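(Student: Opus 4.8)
The plan is to track the convergent numerators $p_\nu$ and $s_\mu$ modulo the (common) denominators and then upgrade a congruence into an exact integer identity by invoking the quality of rational approximation. Everything hinges on one elementary observation. From the standard relation $p_\nu q_{\nu-1} - p_{\nu-1} q_\nu = (-1)^{\nu-1}$ and the recurrence $q_{\nu+1} = a_{\nu+1} q_\nu + q_{\nu-1}$ (so that $q_{\nu-1} \equiv q_{\nu+1} \pmod{q_\nu}$) one obtains
\[
p_\nu \equiv (-1)^{\nu-1}\, q_{\nu+1}^{-1} \pmod{q_\nu},
\]
where $q_{\nu+1}^{-1}$ denotes the inverse of $q_{\nu+1}$ modulo $q_\nu$, which exists because consecutive denominators are coprime. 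The key feature is that $p_\nu \bmod q_\nu$ is completely determined by the pair $(q_\nu, q_{\nu+1})$ together with the parity of $\nu$.

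First I would fix an index pair with $(q_\nu, q_{\nu+1}) = (r_\mu, r_{\mu+1})$ and write the analogous congruence $s_\mu \equiv (-1)^{\mu-1} r_{\mu+1}^{-1} \pmod{r_\mu}$. Since the two pairs are literally equal, the inverses coincide modulo $q_\nu = r_\mu$, which gives $p_\nu \equiv (-1)^{\nu-\mu} s_\mu \pmod{q_\nu}$. Because there are infinitely many coinciding pairs and the common denominator satisfies $q_\nu \to \infty$, a pigeonhole argument lets me pass to an infinite subfamily on which the parity of $\nu - \mu$ is constant; say it is even, the odd case being identical up to sign. On this subfamily $s_\mu = p_\nu + m_\nu q_\nu$ for some integer $m_\nu$.

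The second step makes this exact. Dividing by $q_\nu = r_\mu$ gives $s_\mu/r_\mu = p_\nu/q_\nu + m_\nu$, and since $|\alpha - p_\nu/q_\nu| < 1/(q_\nu q_{\nu+1})$ and $|\beta - s_\mu/r_\mu| < 1/(r_\mu r_{\mu+1}) = 1/(q_\nu q_{\nu+1})$, the triangle inequality yields $|(\beta - \alpha) - m_\nu| < 2/(q_\nu q_{\nu+1}) \to 0$. Hence the integers $m_\nu$ converge to $\beta - \alpha$, are therefore eventually constant, and $\beta - \alpha \in \Z$. On the odd-parity subfamily the same computation produces $s_\mu/r_\mu = -p_\nu/q_\nu + m_\nu$ and thus $\beta + \alpha \in \Z$. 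Either way $\alpha \pm \beta \in \Z$.

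The main obstacle I anticipate is careful bookkeeping of the signs: coincidence of denominator pairs pins down the numerators only up to sign modulo $q_\nu$, reflecting the fact that $\alpha$ and $-\alpha$ share the same convergent denominators, and this is exactly what splits the conclusion into the two alternatives $\alpha - \beta \in \Z$ and $\alpha + \beta \in \Z$. A minor point to check is that the coinciding pairs force the shared denominator to tend to infinity; this is immediate, since for $\nu \ge 1$ the denominators strictly increase, so any fixed value occurs in only finitely many pairs.
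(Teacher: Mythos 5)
Your proof is correct. The congruence $p_\nu \equiv (-1)^{\nu-1} q_{\nu+1}^{-1} \pmod{q_\nu}$ does follow from $p_\nu q_{\nu-1} - p_{\nu-1} q_\nu = (-1)^{\nu-1}$ together with $q_{\nu+1} \equiv q_{\nu-1} \pmod{q_\nu}$ and the coprimality of consecutive denominators; passing to a subfamily on which the parity of $\nu - \mu$ is constant is legitimate; and since the denominators strictly increase, infinitely many coinciding pairs force $q_\nu \to \infty$, so the estimate $\left|(\beta \mp \alpha) - m_\nu\right| < 2/(q_\nu q_{\nu+1}) \to 0$ indeed pins $\beta - \alpha$ (even parity) or $\beta + \alpha$ (odd parity) to an integer.

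Note that the paper itself offers no proof: the lemma is quoted as an obvious property of continued fractions with references to \cite{M} and \cite{MM}. The argument indicated in those sources is genuinely different from yours. There one observes that, by Euclidean division, the pair $(q_\nu, q_{\nu+1})$ determines $a_{\nu+1}$ and $q_{\nu-1}$ via $q_{\nu+1} = a_{\nu+1} q_\nu + q_{\nu-1}$, hence recursively the whole string $a_1, \dots, a_{\nu+1}$ (equivalently, $q_\nu/q_{\nu+1} = [0; a_{\nu+1}, \dots, a_1]$ and one invokes uniqueness of finite continued fraction expansions); infinitely many coinciding pairs then force the partial quotients of $\alpha$ and $\beta$ to agree up to the classical ambiguity $[\dots, c] = [\dots, c-1, 1]$, whose resolution splits the conclusion into $\alpha - \beta \in \Z$ (expansions eventually identical) and $\alpha + \beta \in \Z$ (the mirrored case). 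Your route replaces this string-matching analysis by a congruence on the numerators plus an approximation estimate; what it buys is that the two-sided conclusion $\alpha \pm \beta \in \Z$ falls out automatically from the parity of $\nu - \mu$, with no case analysis of expansion ambiguity, at the modest cost of the pigeonhole step. Both arguments are elementary and of comparable length, so yours is a clean, self-contained alternative to the cited one.
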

\begin{lemma}
	\label{lm:tails}
	If $\alpha_\nu^* = \beta_\mu^*$, then $q_{\nu-1} = r_{\mu-1}, \ q_\nu = r_\mu$.
\end{lemma}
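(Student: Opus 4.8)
The plan is to reduce the statement to the uniqueness of the reduced-fraction representation of a rational number. The key input is the identity already recorded in the excerpt, namely $\alpha_\nu^* = q_{\nu-1}/q_\nu$ and $\beta_\mu^* = r_{\mu-1}/r_\mu$. Under the hypothesis $\alpha_\nu^* = \beta_\mu^*$ this gives a single equation between rationals,
\[
\frac{q_{\nu-1}}{q_\nu} = \frac{r_{\mu-1}}{r_\mu}.
\]
Everything then hinges on showing that both sides are fractions in lowest terms; once that is known, equality of two reduced fractions forces equality of numerators and denominators separately, which is exactly the desired conclusion $q_{\nu-1} = r_{\mu-1}$ and $q_\nu = r_\mu$.

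The one nontrivial point is therefore the coprimality of consecutive convergent denominators, i.e. $\gcd(q_{\nu-1}, q_\nu) = 1$ and likewise for the $r$'s. I would obtain this from the standard determinant identity for convergents, $p_\nu q_{\nu-1} - p_{\nu-1} q_\nu = (-1)^{\nu-1}$: any common divisor of $q_{\nu-1}$ and $q_\nu$ must divide the left-hand side and hence divide $\pm 1$, so the two denominators are coprime. Alternatively one can argue directly from the recurrence $q_\nu = a_\nu q_{\nu-1} + q_{\nu-2}$, which gives $\gcd(q_\nu, q_{\nu-1}) = \gcd(q_{\nu-1}, q_{\nu-2}) = \dots = \gcd(q_0, q_{-1}) = 1$ by induction, using the conventions $q_{-1} = 0$, $q_0 = 1$.

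Assembling the pieces: the fractions $q_{\nu-1}/q_\nu$ and $r_{\mu-1}/r_\mu$ have positive denominators and coprime numerator and denominator, hence are reduced, and they are equal by hypothesis; uniqueness of the reduced representation then yields $q_\nu = r_\mu$ from the denominators and $q_{\nu-1} = r_{\mu-1}$ from the numerators.

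The only thing to be careful about is having the coprimality claim in place; the rest is formal. There are no analytic difficulties here, and no appeal to the independence hypothesis or to the structure of the irrationality measure functions is needed — the lemma is a purely arithmetic fact about continued fractions.
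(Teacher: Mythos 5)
Your proof is correct and matches the standard argument: the paper states this lemma without proof (deferring to \cite{M} and \cite{MM}), and the intended reasoning is exactly your reduction via the mirror identity $\alpha_\nu^* = q_{\nu-1}/q_\nu$, $\beta_\mu^* = r_{\mu-1}/r_\mu$ (recorded in the paper just before the lemma) combined with uniqueness of the reduced-fraction representation. Your coprimality step, via the determinant identity $p_\nu q_{\nu-1} - p_{\nu-1} q_\nu = (-1)^{\nu-1}$ or the recurrence $q_\nu = a_\nu q_{\nu-1} + q_{\nu-2}$, is precisely the one nontrivial ingredient, so nothing is missing.
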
 
From Lemma \ref{lm:pairs} and Lemma \ref{lm:tails} we immediately obtain the following result.
\begin{lemma}
	\label{lm:independent}
	For two irrational numbers $\alpha$ and $\beta$ satisfying $\alpha \pm \beta \not\in\Z$ there are only finitely many $\alpha_\nu^* = \beta_\mu^*$.
\end{lemma}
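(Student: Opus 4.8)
The plan is to argue by contraposition: I will assume there are infinitely many equalities $\alpha_\nu^* = \beta_\mu^*$ and deduce $\alpha \pm \beta \in \Z$, which contradicts the hypothesis. The two preceding lemmas are designed to chain together exactly for this purpose, so the argument should be short.

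First I would record that the map $\nu \mapsto \alpha_\nu^* = q_{\nu-1}/q_\nu$ is injective: the denominators $q_\nu$ are strictly increasing, so distinct indices produce distinct values $\alpha_\nu^*$. Likewise $\mu \mapsto \beta_\mu^*$ is injective. Consequently, each value $\alpha_\nu^*$ can coincide with at most one $\beta_\mu^*$, so infinitely many coincidences $\alpha_\nu^* = \beta_\mu^*$ force the indices $\nu$ (and correspondingly $\mu$) to range over an infinite set rather than repeat.

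Next, to each such coincidence I apply Lemma \ref{lm:tails}, which gives $q_{\nu-1} = r_{\mu-1}$ and $q_\nu = r_\mu$; that is, the consecutive pair $(q_{\nu-1}, q_\nu)$ equals $(r_{\mu-1}, r_\mu)$. Because $\nu$ ranges over an infinite set and $q_\nu$ is strictly increasing, these pairs are pairwise distinct, so we obtain infinitely many coinciding pairs of consecutive denominators. After the harmless reindexing $\nu' = \nu - 1$, $\mu' = \mu - 1$, they take the form $(q_{\nu'}, q_{\nu'+1}) = (r_{\mu'}, r_{\mu'+1})$ considered in Lemma \ref{lm:pairs}. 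Applying that lemma to this infinite family yields $\alpha \pm \beta \in \Z$, the desired contradiction.

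The argument is essentially immediate once the two lemmas are in hand; the only points requiring mild care are the bookkeeping that ensures the coincidences really produce infinitely many \emph{distinct} pairs (handled by the injectivity observation above) and the index shift by one between the hypotheses of Lemma \ref{lm:tails} and Lemma \ref{lm:pairs}. I do not expect any genuine obstacle beyond these routine checks.
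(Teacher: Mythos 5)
Your proposal is correct and follows exactly the paper's route: the paper derives Lemma \ref{lm:independent} directly by combining Lemma \ref{lm:tails} (each coincidence $\alpha_\nu^* = \beta_\mu^*$ yields equal consecutive denominator pairs) with Lemma \ref{lm:pairs} (infinitely many such pairs force $\alpha \pm \beta \in \Z$), stating only that the result is ``immediately'' obtained. Your injectivity and index-shift bookkeeping just makes explicit the routine details the paper leaves implicit.
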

 \begin{figure}[ht]
	\centering
	\def\svgwidth{9cm} 
 \executeiffilenewer{figures/pic2.svg}{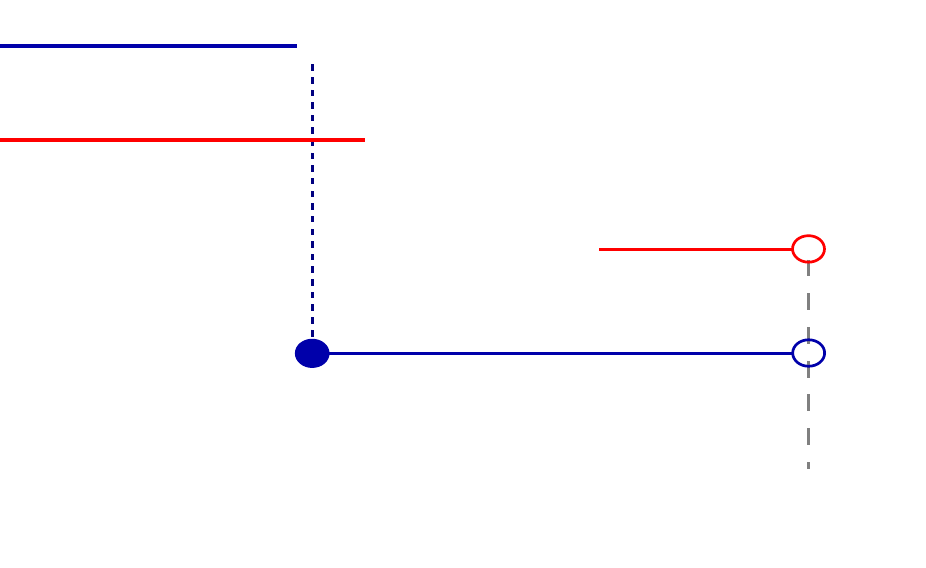}%
  {inkscape -z -D --file=figures/pic2.svg %
 --export-pdf=figures/pic2.pdf --export-latex}%
\begingroup%
  \makeatletter%
  \providecommand\color[2][]{%
    \errmessage{(Inkscape) Color is used for the text in Inkscape, but the package 'color.sty' is not loaded}%
    \renewcommand\color[2][]{}%
  }%
  \providecommand\transparent[1]{%
    \errmessage{(Inkscape) Transparency is used (non-zero) for the text in Inkscape, but the package 'transparent.sty' is not loaded}%
    \renewcommand\transparent[1]{}%
  }%
  \providecommand\rotatebox[2]{#2}%
  \newcommand*\fsize{\dimexpr\f@size pt\relax}%
  \newcommand*\lineheight[1]{\fontsize{\fsize}{#1\fsize}\selectfont}%
  \ifx\svgwidth\undefined%
    \setlength{\unitlength}{266.55312606bp}%
    \ifx\svgscale\undefined%
      \relax%
    \else%
      \setlength{\unitlength}{\unitlength * \real{\svgscale}}%
    \fi%
  \else%
    \setlength{\unitlength}{\svgwidth}%
  \fi%
  \global\let\svgwidth\undefined%
  \global\let\svgscale\undefined%
  \makeatother%
  \begin{picture}(1,0.61365267)%
    \lineheight{1}%
    \setlength\tabcolsep{0pt}%
    \put(0,0){\includegraphics[width=\unitlength,page=1]{pic2.pdf}}%
    \put(0.91430823,0.21788125){\color[rgb]{0,0,0.66666667}\makebox(0,0)[lt]{\lineheight{1.25}\smash{\begin{tabular}[t]{l}$\psi_{\alpha}$\end{tabular}}}}%
    \put(0.91172928,0.33598209){\color[rgb]{1,0,0}\makebox(0,0)[lt]{\lineheight{1.25}\smash{\begin{tabular}[t]{l}$\psi_{\beta}$\end{tabular}}}}%
    \put(0.80542921,0.05758945){\color[rgb]{0,0,0}\makebox(0,0)[lt]{\lineheight{1.25}\smash{\begin{tabular}[t]{l}$q_\nu = r_\mu$\end{tabular}}}}%
    \put(0.28648889,0.16275467){\color[rgb]{0,0,0}\makebox(0,0)[lt]{\lineheight{1.25}\smash{\begin{tabular}[t]{l}$q_{\nu-1}$\end{tabular}}}}%
    \put(0.4956781,0.38757777){\color[rgb]{1,0,0}\makebox(0,0)[lt]{\lineheight{1.25}\smash{\begin{tabular}[t]{l}$\ddots$\end{tabular}}}}%
    \put(0,0){\includegraphics[width=\unitlength,page=2]{pic2.pdf}}%
  \end{picture}%
\endgroup%

	\caption{to Remark \ref{rm:main}}
	\label{pic:Remark}
\end{figure}
The following lemma was proven in \cite{MM}. Here we formulate it without a proof.
\begin{lemma}
	\label{lm:MM}
	Let $d \geq 1$. Assume that the inequalites
	\begin{align}
		\label{eq:lmMM1}
		\xi_\nu &\leq \eta_\mu, \\
		\label{eq:lmMM2}
		\xi_{\nu+1} &\leq \eta_{\mu+d-1}, \\
		\label{eq:lmMM3}
		q_{\nu+1} &\leq r_{\mu+1}, \\
		q_{\nu+2} &= r_{\mu+d} \nonumber
	\end{align}
	hold. Then in (\ref{eq:lmMM1}), (\ref{eq:lmMM2}), (\ref{eq:lmMM3}) we have equalities, and
	\begin{gather}
	d=2, \\
	\alpha_{\mu+2}^* = \beta_{\mu+2}^*.
	\end{gather}
\end{lemma}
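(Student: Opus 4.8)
The plan rests on two facts about $\xi_\nu$ and $\eta_\mu$ that go beyond crude size estimates. From $\xi_\nu=1/(q_\nu\alpha_{\nu+1}+q_{\nu-1})$ and $a_{\nu+1}<\alpha_{\nu+1}<a_{\nu+1}+1$ one reads off the two-sided bound $\tfrac{1}{q_{\nu+1}+q_\nu}<\xi_\nu<\tfrac{1}{q_{\nu+1}}$ (and the same for $\eta$), together with $\xi_{\nu+1}=\xi_\nu/\alpha_{\nu+2}$, so that $\eta_{\mu+d-1}\le\eta_{\mu+1}$ with equality iff $d=2$. The engine, however, is the exact identity obtained from $q_{\nu+1}(q_\nu\alpha-p_\nu)-q_\nu(q_{\nu+1}\alpha-p_{\nu+1})=\pm1$, namely $q_{\nu+1}\xi_\nu+q_\nu\xi_{\nu+1}=1$, and likewise $r_{\mu+1}\eta_\mu+r_\mu\eta_{\mu+1}=1$. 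I also record $\alpha_\nu^*=q_{\nu-1}/q_\nu$, which turns the target $\alpha_{\nu+2}^*=\beta_{\mu+2}^*$ into the denominator statement $q_{\nu+1}/q_{\nu+2}=r_{\mu+1}/r_{\mu+2}$.

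First I dispose of $d=1$. Then the last hypothesis reads $r_{\mu+1}=q_{\nu+2}$, so $\eta_\mu<1/r_{\mu+1}=1/q_{\nu+2}\le 1/(q_{\nu+1}+q_\nu)$, using $q_{\nu+2}=a_{\nu+2}q_{\nu+1}+q_\nu\ge q_{\nu+1}+q_\nu$; but (\ref{eq:lmMM1}) gives $\xi_\nu\le\eta_\mu$ while $\xi_\nu>1/(q_{\nu+1}+q_\nu)$, a contradiction, so $d\ge2$. For $d\ge2$ the denominators are sandwiched as $q_{\nu+1}\le r_{\mu+1}\le\cdots\le r_{\mu+d}=q_{\nu+2}$. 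Feeding (\ref{eq:lmMM1}) and (\ref{eq:lmMM2}) into the $\alpha$-identity gives $1=q_{\nu+1}\xi_\nu+q_\nu\xi_{\nu+1}\le q_{\nu+1}\eta_\mu+q_\nu\eta_{\mu+d-1}$; subtracting the $\beta$-identity $1=r_{\mu+1}\eta_\mu+r_\mu\eta_{\mu+1}$ and using (\ref{eq:lmMM3}) yields the key inequality $q_\nu\eta_{\mu+d-1}\ge r_\mu\eta_{\mu+1}+(r_{\mu+1}-q_{\nu+1})\eta_\mu$, which together with $\eta_{\mu+d-1}\le\eta_{\mu+1}$ already forces $q_\nu\ge r_\mu$.

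The decisive step is the reverse inequality $q_\nu\le r_\mu$. Granting it, everything collapses: for $d=2$ the key inequality becomes $(q_\nu-r_\mu)\eta_{\mu+1}\ge(r_{\mu+1}-q_{\nu+1})\eta_\mu$ with left side $\le0\le$ right side, forcing $q_\nu=r_\mu$ and equality in (\ref{eq:lmMM3}); for $d\ge3$ the strict bound $\eta_{\mu+d-1}<\eta_{\mu+1}$ upgrades $q_\nu\ge r_\mu$ to $q_\nu>r_\mu$, contradicting $q_\nu\le r_\mu$, so $d\ge3$ is excluded and $d=2$. With $q_\nu=r_\mu$ and $q_{\nu+1}=r_{\mu+1}$ the chain producing the key inequality must be equalities throughout, giving $\xi_\nu=\eta_\mu$ and $\xi_{\nu+1}=\eta_{\mu+1}$, i.e.\ equalities in (\ref{eq:lmMM1}) and (\ref{eq:lmMM2}); finally $q_{\nu+1}=r_{\mu+1}$ and $q_{\nu+2}=r_{\mu+2}$ give $\alpha_{\nu+2}^*=q_{\nu+1}/q_{\nu+2}=r_{\mu+1}/r_{\mu+2}=\beta_{\mu+2}^*$, with Lemma \ref{lm:tails} available to repackage this as the denominator equalities if preferred.

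The hard part is precisely $q_\nu\le r_\mu$. The hypotheses controlling it sit at the top of the block: running the same manipulation one level up, (\ref{eq:lmMM2}) and $r_{\mu+d}=q_{\nu+2}$ give $q_{\nu+1}\xi_{\nu+2}\ge r_{\mu+d-1}\eta_{\mu+d}$, an estimate about the next tails rather than about $q_\nu,r_\mu$ directly. Closing the gap therefore requires pushing this top estimate back through the recurrences $q_{\nu+2}=a_{\nu+2}q_{\nu+1}+q_\nu$ and $r_{\mu+d}=b_{\mu+d}r_{\mu+d-1}+r_{\mu+d-2}$, which tie both ends of the block to the common value $q_{\nu+2}=r_{\mu+d}$. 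I expect this coupling of the two scales through the single equality of denominators to be the technical heart of the proof; once it yields $q_\nu\le r_\mu$, the bookkeeping above finishes the lemma.
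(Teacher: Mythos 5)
The paper never proves this lemma itself --- it is imported from \cite{MM} explicitly ``without a proof'' --- so your attempt can only be judged on its own merits, and on those merits it is incomplete: it is a correct chain of reductions hanging on an unproven central claim. What you do establish is sound. The exclusion of $d=1$ via $\xi_\nu>1/(q_{\nu+1}+q_\nu)$ and $\eta_\mu<1/r_{\mu+1}=1/q_{\nu+2}\le 1/(q_{\nu+1}+q_\nu)$ is correct; so are the identities $q_{\nu+1}\xi_\nu+q_\nu\xi_{\nu+1}=1$, $r_{\mu+1}\eta_\mu+r_\mu\eta_{\mu+1}=1$, the derived inequality $q_\nu\eta_{\mu+d-1}\ge r_\mu\eta_{\mu+1}+(r_{\mu+1}-q_{\nu+1})\eta_\mu$ (hence $q_\nu\ge r_\mu$), and the conditional bookkeeping showing that \emph{if} $q_\nu\le r_\mu$ then $d\ge3$ is impossible, $d=2$ forces $q_\nu=r_\mu$, $q_{\nu+1}=r_{\mu+1}$, equalities in (\ref{eq:lmMM1})--(\ref{eq:lmMM3}), and $\alpha^*_{\nu+2}=q_{\nu+1}/q_{\nu+2}=r_{\mu+1}/r_{\mu+2}=\beta^*_{\mu+2}$. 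But the inequality $q_\nu\le r_\mu$, which you yourself label the decisive step, is nowhere proved; your final paragraph only records the hope that it follows from the top estimate $q_{\nu+1}\xi_{\nu+2}\ge r_{\mu+d-1}\eta_{\mu+d}$ and the recurrences. That missing step \emph{is} the lemma: reducing the statement to an unproven claim of the same depth is not a proof.

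The gap is genuine and cannot be closed by the route you sketch, because the top estimate involves the tails $\alpha_{\nu+3}$, $\beta_{\mu+d+1}$, which are free parameters in $(1,\infty)$, and all of your two-sided bounds are one-directional: they never couple the bottom of the $\beta$-block to the top, so they remain consistent with $q_\nu>r_\mu$. What actually forces the conclusion is an \emph{opposition} between (\ref{eq:lmMM1}) and (\ref{eq:lmMM2}) transmitted through the tail recursion $\alpha_{\nu+2}=a_{\nu+2}+1/\alpha_{\nu+3}$, combined with integrality of partial quotients via $a_{\nu+2}q_{\nu+1}+q_\nu=r_{\mu+d}$. Concretely, since $\xi_{\nu+1}=1/(q_{\nu+2}+q_{\nu+1}/\alpha_{\nu+3})$ and $\eta_{\mu+d-1}=1/(r_{\mu+d}+r_{\mu+d-1}/\beta_{\mu+d+1})$, hypothesis (\ref{eq:lmMM2}) with $q_{\nu+2}=r_{\mu+d}$ is exactly $q_{\nu+1}/\alpha_{\nu+3}\ge r_{\mu+d-1}/\beta_{\mu+d+1}$, pushing $\alpha_{\nu+3}$ down, while (\ref{eq:lmMM1}) with (\ref{eq:lmMM3}) is $q_\nu/\alpha_{\nu+2}\ge (r_{\mu+1}-q_{\nu+1})+r_\mu/\beta_{\mu+2}$, which through $\alpha_{\nu+2}=a_{\nu+2}+1/\alpha_{\nu+3}$ pushes $\alpha_{\nu+3}$ up. For instance, when $d=2$ and $a_{\nu+2}=b_{\mu+2}$, writing $x=1/\alpha_{\nu+3}$, $y=1/\beta_{\mu+3}$, hypothesis (\ref{eq:lmMM2}) reads $q_{\nu+1}x\ge r_{\mu+1}y$, which with (\ref{eq:lmMM3}) gives $x\ge y$, while (\ref{eq:lmMM1}), after substituting $q_\nu-r_\mu=a_{\nu+2}(r_{\mu+1}-q_{\nu+1})$ from the denominator equality, reduces to $r_\mu(y-x)\ge (r_{\mu+1}-q_{\nu+1})\,x\,\beta_{\mu+2}\ge 0$; together these force $x=y$ and $r_{\mu+1}=q_{\nu+1}$, i.e.\ all the equalities at once --- note that no separate proof of $q_\nu\le r_\mu$ ever appears, and the general case requires a further case analysis on the integer $a_{\nu+2}-b_{\mu+2}$. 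There is also a structural flaw in your architecture: for $d\ge3$ the hypotheses are inconsistent (that is precisely the lemma's content), so any uniform-in-$d$ proof of ``$q_\nu\le r_\mu$'' must already contain the full contradiction; it cannot serve as a lightweight input to the case analysis you built on top of it.
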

Let $t \in \R_+$. For $n$-tuple $\blalpha=(\alpha_1, \dots, \alpha_n)$ of pairwise independent irrational numbers let
$$\tau(t) = \Big| \left\{j \in \{1, \dots, n \}: \func{j} \text{ is discontinuous at } t \right\}\Big|$$
be the number of discontinuous functions at $t$.
Set $$ \func{j}(T-1) = \lim\limits_{t \rightarrow T-}\func{j}(t)$$ and 
$$ \blsigma(T-1) = \blsigma\{\func{1}(T-1), \dots, \func{n}(T-1)\}.$$
\begin{remark}
	\label{rm:main}
	Consider two independent irrational numbers $\alpha, \beta$. From Lemma \ref{lm:independent} and Lemma~\ref{lm:MM} we see that if
	\begin{gather*}
		q_{\nu} = r_{\mu} \text{ and } \psi_\alpha(q_{\nu}-1) < \psi_\beta(r_\mu-1),
	 \end{gather*}
	 then
	 \begin{equation*}
	 	\psi_\alpha(q_{\nu-1}-1) > \psi_\beta(q_{\nu-1}-1)
	 \end{equation*}
	 for all $\nu, \mu$ large enough. This situation is illustrated in Figure \ref{pic:Remark}. 
\end{remark}
Since there is only a finite number of permutations, there exists $T_0$ such that
\begin{equation}
	\label{eq:sigma repution}
	\forall t_0 > T_0 \ \exists t > t_0: \blsigma(t) = \blsigma(t_0).
\end{equation}
 \begin{figure}
	\centering
	\def\svgwidth{15cm} 
 \executeiffilenewer{figures/pic1.svg}{figures/pic1.pdf}%
  {inkscape -z -D --file=figures/pic1.svg %
 --export-pdf=figures/pic1.pdf --export-latex}%
  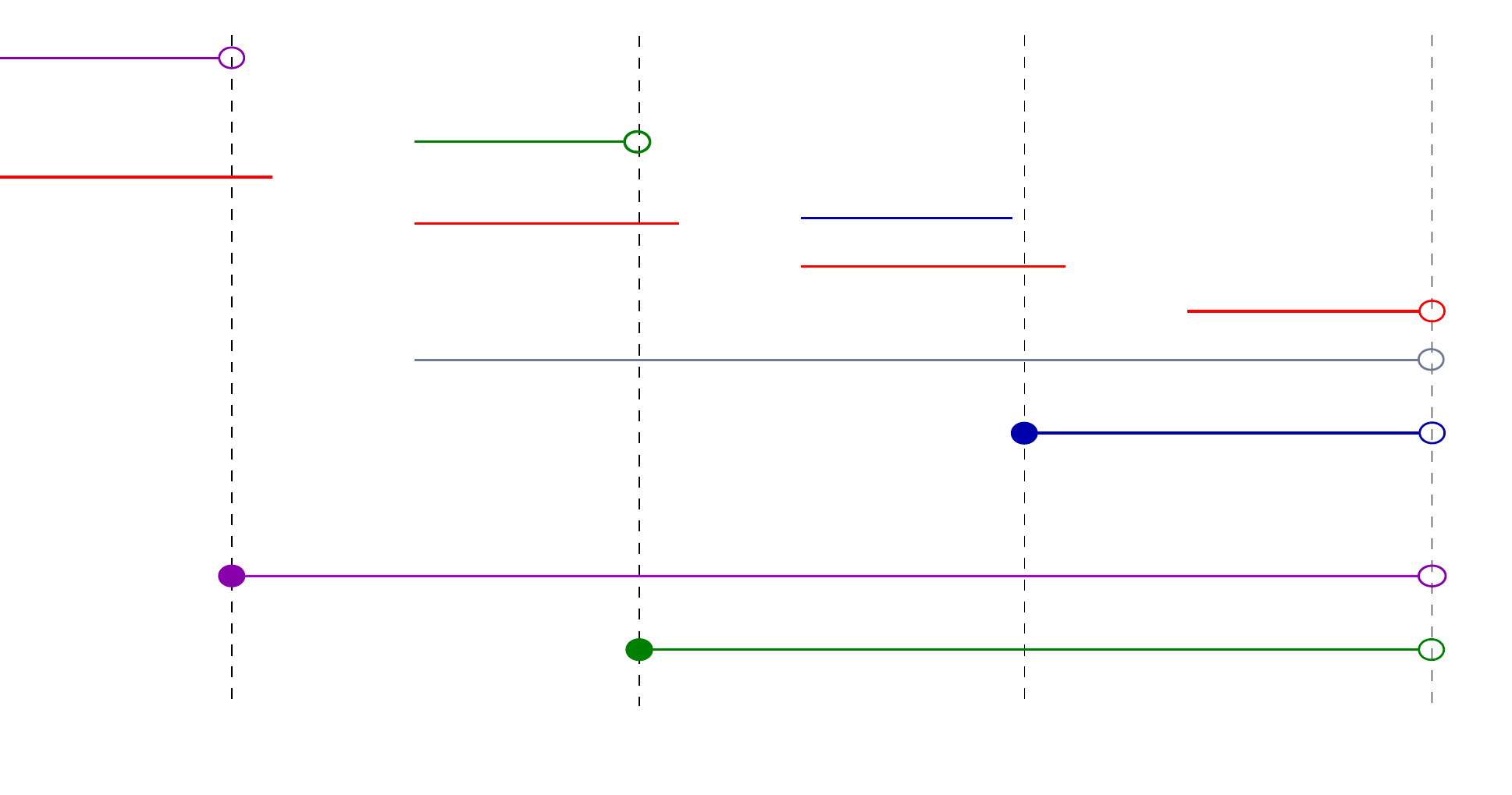%

	\caption{to Lemma \ref{lm:main}}
	\label{pic:Lemma}
\end{figure}
Our next auxiliary result is the following
\begin{lemma}
\label{lm:main}
    For all $t$ large enough one has $$\tau(t) \leq \knm(\blalpha).$$
\end{lemma}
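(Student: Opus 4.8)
The plan is to convert the local quantity $\tau(T)$ into a count of genuinely distinct permutations and then feed this into \eqref{eq:sigma repution}. First I would record the following consequence of \eqref{eq:sigma repution}: iterating it shows that every permutation occurring at some $t > T_0$ in fact occurs infinitely often, so the permutations realised on $(T_0, +\infty)$ are exactly the $\knm(\blalpha)$ infinitely occurring ones. Hence it suffices to prove that for every sufficiently large $T$ the jump at $T$ forces at least $\tau(T)$ pairwise distinct permutations to appear at moments $> T_0$; this immediately yields $\tau(T) \le \knm(\blalpha)$.

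Fix a large $T$ with $m = \tau(T) \ge 1$ and let $f_1, \dots, f_m$ be the functions discontinuous at $T$, labelled so that $f_1(T-1) > f_2(T-1) > \dots > f_m(T-1)$ (for $T$ large all values are distinct by independence). For each $i$ let $t_i < T$ be the last point of discontinuity of $f_i$ before $T$, i.e. the denominator of the corresponding $\alpha$ immediately preceding $T$. Two preliminary facts are needed. (a) If $t_i = t_{i'}$ for some $i \neq i'$, then the two numbers share the consecutive pair of denominators $(t_i, T)$, so by Lemma \ref{lm:pairs} this occurs for only finitely many $T$; thus for $T$ large the $t_i$ are pairwise distinct. (b) Since $f_i$ is constant on $[t_i, T)$ and non-increasing, for every $i \ge 2$ and every $t \in [t_i, T)$ we have $f_1(t) \ge f_1(T-1) > f_i(T-1) = f_i(t)$, i.e. $f_1 > f_i$ on all of $[t_i, T)$.

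The key step applies Remark \ref{rm:main} (equivalently Lemma \ref{lm:MM}) to each pair $(f_1, f_i)$ with $i \ge 2$: both are discontinuous at $T$ and $f_i(T-1) < f_1(T-1)$, so at the step just before $f_i$'s previous jump one gets $f_i(t_i-1) > f_1(t_i-1)$. I then propose to test the $m$ permutations $\blsigma(T-1), \blsigma(t_2-1), \dots, \blsigma(t_m-1)$ — all realised at moments $> T_0$ for $T$ large, since $t_i \to \infty$ — and to separate them using only the positions relative to the single reference function $f_1$. Indeed $\blsigma(T-1)$ places $f_1$ above every $f_i$ (by (b) with $t = T-1$), while each $\blsigma(t_i-1)$ places $f_i$ above $f_1$ (by the Remark), so $\blsigma(T-1)$ differs from all the others. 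For $i \neq i'$, say $t_i < t_{i'}$; then $t_{i'}-1 \ge t_i$, so $t_{i'}-1 \in [t_i, T)$ and (b) gives $f_1 > f_i$ at time $t_{i'}-1$, whereas the Remark gives $f_i > f_1$ at time $t_i-1$. Thus the pair $\{f_1, f_i\}$ is ordered oppositely at the two moments, so $\blsigma(t_i-1) \neq \blsigma(t_{i'}-1)$. This produces $m$ distinct permutations and closes the argument (compare Figure \ref{pic:Lemma}).

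The main obstacle is that Remark \ref{rm:main} controls the relative order of a pair only at the single step preceding the common jump, so one cannot track the full order of the $m$ functions across a whole interval — between far-apart times their order may flip many times by Theorem \ref{th:KM}. The device that circumvents this is to compare every jumping function solely against the fixed reference $f_1$: against $f_1$ the two available facts, namely the one-step inequality $f_i > f_1$ at $t_i-1$ from the Remark and the robust inequality $f_1 > f_i$ throughout $[t_i, T)$ from monotonicity, already suffice, the separation of $t_i-1$ and $t_{i'}-1$ being read off from whichever has the later previous jump. The only other delicate point is the distinctness of the previous jumps $t_i$, which is precisely where Lemma \ref{lm:pairs} is used.
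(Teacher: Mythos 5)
Your proposal is correct and is essentially the paper's own argument: you produce the same witness permutations $\blsigma(T-1), \blsigma(t_2-1), \dots, \blsigma(t_m-1)$ and distinguish them exactly as the paper does, by comparing each jumping function against the top reference $f_1$ using Remark \ref{rm:main} (the order flip just before the previous jump $t_i$) together with monotonicity (so $f_1$ stays above $f_i$ on all of $[t_i, T)$), with Lemma \ref{lm:pairs} ensuring the previous denominators $t_i$ are pairwise distinct. The only difference is structural and cosmetic: the paper first fixes a value $k$ attained by $\tau$ infinitely often (introducing the infinite set $\T$), whereas you argue directly for every sufficiently large $T$; both arguments then feed the $\tau(T)$ distinct permutations into (\ref{eq:sigma repution}) in the same way.
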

\begin{proof}
	We consider arbitary $k \geq 2$ satisfying the condition
	\begin{equation*}
		\forall t_0 > 0 \ \exists t > t_0: \tau(t) = k.
	\end{equation*}
	Set $T > T_0$ large enough with $\tau(T) = k$. Let $\{\func{j_i}\}_{i=1}^k$ be the functions that are discontinuous at $T$. Since a finite collection of functions $\{\func{i}\}_{i=1}^n$ has finite number of $k$-element subsets, the set
	\begin{equation*}
		\T = \{t \geq T: \{\func{j_i}\}_{i=1}^k \text{ are discontinuous at } t\}
	\end{equation*}
	is infinite. Without loss of generality assume that $\{\func{j}\}_{j=1}^k$ are the functions that are discontinuous at $\T$. Let
	\begin{equation*}
		q^j_{l_j} = T, \ 1 \leq j \leq k 
	\end{equation*}
	be the denominators of convergents to $\alpha_j$ with $1 \leq j \leq k$.
	Since $T$ is large enough we may assume by Lemma \ref{lm:pairs} that
	\begin{equation*}
			q_{l_j-1}^j \neq q_{l_i-1}^i \text{ for } i \neq j.
	\end{equation*}
	Also we can assume, by reordering $ \{\func{j}\}_{j=1}^k$ if necessary, that 
	\begin{equation}
		\label{eq:start}
		\func{1}(T-1) > \func{i}(T-1) \text{ for all } 2 \leq i \leq k,
	\end{equation}
	and
	\begin{equation}
		\label{eq:denom}
		q_{l_2 - 1}^2 > q_{l_{3} - 1}^{3} > \ldots > q_{l_n - 1}^n.
	\end{equation}
	Denote 
	\begin{equation*}
		\blsigma_1 = \blsigma(T - 1), \ \blsigma_j = \blsigma(q_{l_j-1}^j-1), \ 2 \leq j \leq k.
	\end{equation*}
	From Remark \ref{rm:main} we see that for any $2 \leq i \leq k$
	\begin{equation}
		\label{eq:change}
		\func{1}(q_{l_i-1}^i - 1) < \func{i}(q_{l_i-1}^i - 1).
	\end{equation}
	and from (\ref{eq:start})
	\begin{equation}
		\label{eq:psi 1 > psi i}
		\func{1}(t) > \func{i}(t) \text{ for } t \in [q_{l_i-1}^i, T).
	\end{equation}
	This result is visualised in Figure \ref{pic:Lemma}.\\
	Consequently, for $2 \leq j \leq k$ one has
	\begin{equation*}
		\blsigma_j \neq \blsigma_i \text{ for } 1 \leq i < j
	\end{equation*}
	because of (\ref{eq:denom}), (\ref{eq:change}) and (\ref{eq:psi 1 > psi i}).
	So for $\tau(T) = k$ we have at least $k$ different permutations. From (\ref{eq:k defin}) and (\ref{eq:sigma repution}) one has
	\begin{equation*}
		k \leq \knm(\blalpha).
	\end{equation*}
	Since $k$ is arbitary, this completes the proof of Lemma \ref{lm:main}.
\end{proof}

\section{Proof of Theorem \ref{th:main}}

	Let $T > T_0$. Without loss of generality we can assume that 
	\begin{equation*}
		\blsigma(T) = (1, 2, 3, \dots, n),
	\end{equation*}
	 that is
	 \begin{equation*}
	 	\func{1}(T) > \func{2}(T) > \dots > \func{n}(T).
	 \end{equation*}
	Let $\blsigma_1 = \blsigma(T)$ and $T_1 = T$. Now we define inductivly values $T_2, \dots, T_k$ by the following relation
	\begin{equation}
		\label{eq:def I_j}
		T_j = \min\{ t > T_1: \blsigma(t) \neq \blsigma_i, \ 1 \leq i < j \}, \ 2 \leq j \leq k,
	\end{equation}
	where
	\begin{equation}
	\label{eq:sigma_def}
		\blsigma_i = \blsigma(T_i) \text{ for } 2 \leq i < j.
	\end{equation}
	Now $\blsigma_1, \dots, \blsigma_k$ are all different permutations which occurs infinitely many times.\\
	For $2 \leq j \leq k$ set
	\begin{multline}
		\label{eq:index}
		I_j = \{m \in \{1, \dots, n\}: \func{m}(t) \text{ is discontinuous at } T_j \\ \text{ and continuous at } T_i \text{ for all } 2 \leq i < j\},	
	\end{multline}
	in particular
	\begin{equation*}
		I_2 = \{m \in \{1, \dots, n\}: \func{m}(t) \text{ is discontinuous at } T_2  \}.
	\end{equation*}
	Let	$n_j$ be the number of elements in the set $I_j, \ 2 \leq j \leq k$.	
	For a given $j$  consider the collection of functions $\{\func{i}\}_{i \in I_j}$. Let
	\begin{gather*}
	 \widetilde{\blsigma}_s = \blsigma_s\left.\right|_{\{\alpha_i\}_{i \in I_j}}, \ 1 \leq s \leq k
	\end{gather*}
	be permutations restricted on the collection $\{\func{i}\}_{i \in I_j}$ (see Figure \ref{pic:Theorem}).
	Now we will show that
	\begin{equation}
		\label{eq:sigma_eq}
		\widetilde{\blsigma}_1 = \widetilde{\blsigma}_2 = \dots = \widetilde{\blsigma}_{j-1}.
	\end{equation}
	Indeed, suppose that for some $2 \leq i < j$ we have
	\begin{equation}
		\label{eq:sigma neq}
		\widetilde{\blsigma}_i \neq \widetilde{\blsigma}_1.
	\end{equation}
	It is clear that there are different $s, m \in I_j$ such that
	\begin{equation*}
		\func{m}(T_1) > \func{s}(T_1)
	\end{equation*}
	and
	\begin{equation*}
		\func{m}(T_i) < \func{s}(T_i).
	\end{equation*}
	Set
	\begin{equation}
		t_0 = \min \{ T_1<t \leq T_i: \func{m}(t) < \func{s}(t)  \}.
	\end{equation}
	So, $\func{m}$ is discontinuous at $t_0$ and
	\begin{equation}
		\label{eq:contr}
		\blsigma(t_0) \neq \blsigma(t) \text{ for } t \in [T_1, t_0).
	\end{equation}
	From (\ref{eq:def I_j}), (\ref{eq:sigma_def}) and (\ref{eq:contr}) we see that $t_0 \in\{T_l\}_{l=2}^{i}$ and $\func{m}$ is discontinuous at $T_l$ for some $2 \leq l < j$, which contradicts definition (\ref{eq:index}). So the assumption (\ref{eq:sigma neq}) is false and (\ref{eq:sigma_eq}) is proved.  
	
	Consider $\knm$-index for the collection of $n_j$ numbers $\{\alpha_i\}$. We see that
	\begin{equation*}
		\knm(\{\alpha_i\}_{i \in I_j}) \leq k - (j - 1) + 1 = k - j + 2
	\end{equation*}
	because of (\ref{eq:sigma_eq}).
	By Lemma \ref{lm:main} we have
	\begin{equation*}
		n_j \leq \knm(\{\alpha_i\}_{i \in I_j}),
	\end{equation*}
	 so
	\begin{equation}
		\label{eq:main k}
		n_j \leq k - j + 2. 
	\end{equation}	
	 \begin{figure}
		\centering
		\def\svgwidth{15cm}
 \executeiffilenewer{figures/pic3.svg}{figures/pic3.pdf}%
  {inkscape -z -D --file=figures/pic3.svg %
 --export-pdf=figures/pic3.pdf --export-latex}%
  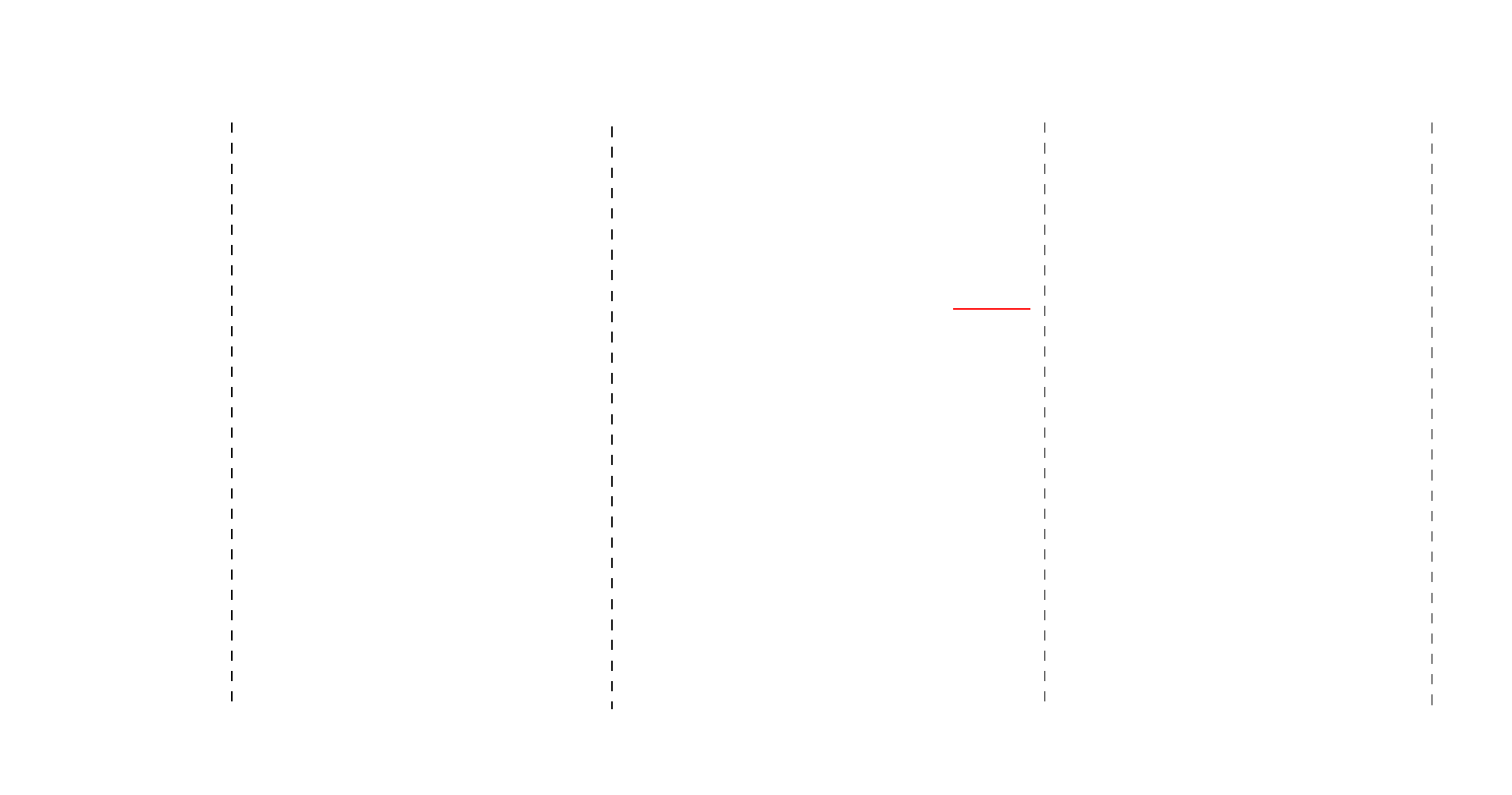%

		\caption{to Theorem \ref{th:main}}
		\label{pic:Theorem}
	\end{figure}
	All of the possible permutations (which occur infinitely many times) are listed among $\blsigma_1, \dots, \blsigma_k$. By Theorem \ref{th:KM} we also should have a change of a sign in difference of irrationality measure functions for every pair $(\alpha_j, \alpha_n)$ with $1 \leq j < n$. This means a change should have happened in some $T_j$. Taking into account the definition of $I_j$ we get that
	\begin{equation*}
		j \in \bigsqcup\limits_{i=2}^k I_i \text{ for all } 1 \leq j < n
	\end{equation*}
	(here we have a disjoint union of the sets $I_i$).
	
	This gives
	\begin{equation*}
		n - 1 \leq \left|\bigsqcup\limits_{i=2}^k I_i\right| = \sum\limits_{i=2}^k n_i
	\end{equation*}
	and finally from (\ref{eq:main k}) we have
	\begin{equation*}
		n \leq 1 + \sum\limits_{i=2}^k n_i \leq \frac{k(k+1)}{2}.
	\end{equation*}
	The proof of Theorem \ref{th:main} is complete.
	\begin{ackn}
		The author thanks Nikolay Moshchevitin for many helpful discussions and support.
	\end{ackn}

\end{document}